 	\definecolor{darkred}{rgb}{0.5,0,0}
 	\definecolor{darkgreen}{rgb}{0,0.5,0}
 	\definecolor{darkblue}{rgb}{0,0,0.5} 	\hypersetup{colorlinks,linkcolor=darkblue,filecolor=darkgreen,urlcolor=darkred,citecolor=darkblue}
\newcommand{\thickset}{E}
\newcommand{\ii}{\mathrm{i}}
\newcommand{\drm}{\mathrm{d}}
\newcommand{\euler}{\mathrm{e}}
\newcommand{\Z}{\mathbb{Z}}
\newcommand{\N}{\mathbb{N}}
\newcommand{\R}{\mathbb{R}}
\newcommand{\C}{\mathbb{C}}
\newcommand{\F}{\mathcal{F}}
\newcommand{\1}{\mathbf{1}}
\newcommand{\from}{\colon}
\newcommand{\re}{\operatorname{Re}}
\newcommand{\supp}{\operatorname{supp}}
\newcommand{\id}{\operatorname{Id}}
\newcommand{\ran}{\operatorname{Ran}}
\DeclareMathOperator*{\esssup}{ess\,sup}
\newcommand{\norm}[1]{\left\lVert#1\right\rVert}
\newcommand{\abs}[1]{\left\lvert#1\right\rvert}
\newtheorem{Theorem}{Theorem}[section]
\newtheorem{Proposition}[Theorem]{Proposition}
\theoremstyle{definition}
\newtheorem{Definition}[Theorem]{Definition}
\theoremstyle{remark}
\newtheorem{Remark}[Theorem]{Remark}
\title{Observability and null-controllability for parabolic equations in $L_p$-spaces}
\author{Clemens Bombach}
\affil{Technische Universit\"at Chemnitz, Fakult\"at f\"ur Mathematik, 09107 Chemnitz, Germany}
\author{Dennis Gallaun}
\affil{Technische Universit\"at Hamburg, Institut f\"ur Mathematik, 
21073 Hamburg, Germany}
\author[2,3]{Christian Seifert}
\affil{Technische Universit\"at Clausthal, Institut f\"ur Mathematik, 
38678 Clausthal-Zellerfeld, Germany}
\author[4]{Martin Tautenhahn}
\affil{Universit\"at Leipzig,
Mathematisches Institut,
04109 Leipzig,
Germany
}
\date{\vspace{-7ex}}
\begin{document}
\maketitle
\begin{abstract}
We study (cost-uniform approximate) null-controllability of parabolic equations in $L_p(\R^d)$ and provide explicit bounds on the control cost. 
In particular, we consider systems of the form
$\dot{x}(t) = -A_px(t) + \1_\thickset u(t)$, $x(0) = x_0\in L_p (\R^d)$, with interior control on a so-called thick set $\thickset \subset \R^d$, where $p\in [1,\infty)$, and where $A$ is an elliptic operator of order $m \in \N$ in $L_p(\mathbb{R}^d)$. 
We prove null-controllability of this system via duality and a sufficient condition for observability. This condition is given by an uncertainty principle and a dissipation estimate. 
Our result unifies and generalizes earlier results obtained in the context of Hilbert and Banach spaces. In particular, our result applies to the case $p=1$.
\\[1ex]
\textsf{\textbf{Mathematics Subject Classification (2020).}} 47D06, 35Q93, 47N70, 93B05, 93B07.
\\[1ex]
\textbf{\textsf{Keywords.}} Null-controllability, Banach space, non-reflexive, $C_0$-semigroups, elliptic operators, observability estimate, $L_p$-spaces.
\end{abstract}
\section{Introduction}

We consider parabolic control systems on $L_p(\R^d)$, $p\in [1,\infty)$, of the form
\begin{equation}
  \label{eq:system-intro}
  \dot{x}(t) = -A_p x(t) + \1_\thickset u(t),\quad t\in (0,T],\quad x(0) = x_0\in L_p(\R^d),
\end{equation}
where $-A_p$ is a strongly elliptic differential operator of order $m \in \N$ with constant coefficients, $\1_\thickset \from L_p (\thickset)\to L_p(\R^d)$ is the embedding from a measurable set $\thickset \subset \R^d$ to $\R^d$,  $T>0$, and where $u \in L_r ((0,T);L_p (\thickset))$ with some $r \in [1,\infty]$. Hence, the influence of the control function $u$ is restricted to the subset $\thickset$. Note that we allow for lower order terms in the strongly elliptic differential operator. 
The focus of this paper is laid on null-controllability, that is, for any initial condition $x_0\in L_p(\R^d)$ there is a control function $u\in L_r ((0,T);L_p (\thickset))$ such that the mild solution of \eqref{eq:system-intro} at time $T$ equals zero. We will also be concerned with the notion of cost-uniform approximate null-controllability (or approximate null-controllability with uniformly bounded controls), which means that there exists $C>0$ such that for all $\varepsilon > 0$ and all $x_0\in L_p(\R^d)$ with $\norm{x_0}_{L_p(\R^d)}\leq 1$ we can find a control function $u \in L_r ((0,T);L_p (\thickset))$ with $\norm{u}_{L_r ((0,T);L_p (\thickset))}\leq C$ such that the mild solution of \eqref{eq:system-intro} at time $T$ has norm smaller than $\varepsilon$; in reflexive spaces, these two notions agree (see \cite{Carja-88}). By linearity, (cost-uniform approximate) null-controllability implies that any target state in the range $\ran(\euler^{-TA_p})$ of the semigroup generated by $-A_p$ can be reached (up to an error $\varepsilon$) within time $T$. 
\par
We will show in Theorem \ref{thm:null-control} that if $\thickset$ is a so-called thick set, then the system is cost-uniform approximately null-controllable for $p=1$ and null-controllable if $p\in (1,\infty)$. 
Thus, we generalize the results of \cite{GallaunST-20} in two ways. On the one hand, we are able to deal with the case $p=1$ which is important for applications; just note that, e.g., the classical heat equation in $L_1$ yields interpretations of the solution as heat densities, while diffusing population models were also considered in $L_1$-spaces. On the other hand, we allow for lower order terms in the operator $-A_p$.
Moreover, we provide explicit upper bounds on the control cost, i.e.\ on the norm of the control function $u$ which steers the system (approximately) to zero at time $T$, which are explicit in terms of geometric properties of the thick set $\thickset$ and of the final time $T$. 
\par
Null-controllability for (linear) systems in $L_p(\Omega)$ with control function in $L_r$ has already been studied earlier in the literature, both in bounded as well as unbounded domains $\Omega\subseteq\R^d$. However, mostly the investigations are done in the Hilbert space context, i.e.\ $p=r=2$.
More precisely, for bounded domains $\Omega$, null-controllability has been considered in \cite{FattoriniR-71,LebeauR-95,FursikovI-96,ErvedozaZ-11,MartinRR-14} with various methods. Note that in many applications $-A_2$ is self-adjoint and thus spectral theoretic methods are applicable. Null-controllability for unbounded domains $\Omega$ has been studied in \cite{Teresa-97, CabanillasMZ-01, MicuZ-01a, MicuZ-01, CannarsaMV-04, Miller-05, KalimerisO-20}. 
The fact that thickness of $E$ is necessary and sufficient to obtain null-controllability for the heat equation in $L_2(\R^d)$ has been realized in \cite{EgidiV-18,WangWZZ-19}.
Turning away from the Hilbert space case, we mention \cite{GallaunST-20}, where $\Omega=\R^d$, $p\in (1,\infty)$ and $r\in[1,\infty]$.


\par
An equivalent formulation of cost-uniform approximate null-controllability is final-state observability of the dual system to \eqref{eq:system-intro}. This means that there exists a constant $C_{\mathrm{obs}} \geq 0$ such that for all $\varphi \in L_p(\R^d)'$ we have
\begin{equation*}
	\norm{S'_T \varphi}_{L_p(\R^d)'} \leq \begin{cases}
    C_{\mathrm{obs}} \left(\int_0^T \norm{(S'_t \varphi)|_\thickset}_{L_p(\thickset)'}^{r'} \drm t\right)^{1/r'} & \text{if } r'\in [1,\infty),\\
    C_{\mathrm{obs}}\esssup\limits_{t\in [0,T]} \norm{(S'_t \varphi)|_\thickset}_{L_p(\thickset)'} & \text{if } r'=\infty,
 \end{cases} 
\end{equation*}
where $(S_t)_{t\geq 0}$ is the $C_0$-semigroup generated by $-A_p$ and $r' \in [1,\infty]$ is such that $1/r + 1/r' = 1$.
This equivalence follows from Douglas' lemma, see \cite{Douglas-66} for Hilbert spaces, and \cite{Embry-73,DoleckiR-77,Harte-78,CurtainP-78,Carja-85,Carja-88,Forough-14} for Banach spaces.
\par


\par
In Section \ref{sec:application} we formulate our results on final-state observability in Theorem \ref{thm:obs}. Then the (cost-uniform approximate) null-controllability in Theorem \ref{thm:null-control} follows as a consequence of the above-mentioned duality.
The proof of Theorem \ref{thm:obs}, which can be found in Section \ref{sec:dissipation}, rests on an abstract observability estimate stated in the appendix. There, Theorem \ref{thm:spectral+diss-obs} provides a generalization of the abstract result in \cite{GallaunST-20} for not necessarily strongly continuous semigroups. In view of \cite{Lotz-85}, this is particularly important for final-state observability  in $L_\infty$ or, put differently, cost-uniform approximate null-controllability in $L_1$.
\par
The main strategy we follow to prove observability has first been described for the Hilbert space case (i.e.\ $p=r=2$) in \cite{Miller-10} inspired by \cite{LebeauR-95,LebeauZ-98,JerisonL-99}, and further studied, e.g., in \cite{TenenbaumT-11,WangZ-17,BeauchardP-18,NakicTTV-20,Barcena-PetiscoZ-21}. It combines an (abstract) uncertainty principle or unique continuation estimate with a dissipation estimate to obtain the final-state observability via an iterative argument; cf.\ Theorem \ref{thm:spectral+diss-obs}. 
However, far less is known about its generalization to Banach spaces; to the best of our knowledge, we are only aware of \cite{GallaunST-20}, which applied the strategy for the case of strongly continuous semigroups.
To finally obtain Theorem \ref{thm:obs} we thus need to show the uncertainty principle and the dissipation estimate. While the uncertainty principle is a consequence of the Logvinenko--Sereda theorem (see Theorem \ref{Thm:Logvinenko-Sereda_Rd}), the dissipation estimate is shown via explicit estimates on the kernel of the semigroup; cf.\ Proposition \ref{Prop:Dissipation}. This enables us to cover the case $p\in\{1,\infty\}$ as well, in contrast to the interpolation technique used in \cite{GallaunST-20}.
\par
In Section \ref{sec:further_directions} we discuss further directions and developments, particularly focussing on control to trajectories, some nonlinear problems as well as the question of how good null controls can actually be found.
\section{Observability and Null-controllability in \texorpdfstring{$L_p$}{Lp}-Spaces}
\label{sec:application}

In order to formulate our main theorems, we review some basic facts from Fourier analysis. For details we refer, e.g., to the textbook \cite{Grafakos-14}. We denote by $\mathcal{S}(\R^d)$ the Schwartz space of rapidly decreasing functions, which is dense in $L_p(\R^d)$ for all $p \in[1,\infty)$. The space of tempered distributions, i.e.\ the topological dual space of $\mathcal{S}(\R^d)$, is denoted by $\mathcal{S}'(\R^d)$.
For $f\in \mathcal{S}(\R^d)$ let $\F f\from\R^d\to\C$ be the Fourier transform of $f$ defined by
\[\F f (\xi) := \int_{\R^d} f(x) \euler^{-\ii\xi\cdot x}\drm x.\]
Then $\F\from \mathcal{S}(\R^d)\to \mathcal{S}(\R^d)$ is bijective, continuous and has a continuous inverse, given by
\[\F^{-1} f(x) = \frac{1}{(2\pi)^d} \int_{\R^d} f(\xi) \euler^{\ii x\cdot \xi}\drm \xi\]
for all $f\in \mathcal{S}(\R^d)$. 
For $u\in \mathcal{S}'(\R^d)$ the Fourier transform is again denoted by $\F$ and is given by $(\F u)(\phi) = u(\F \phi)$ for $\phi\in \mathcal{S}(\R^d)$. By duality, the Fourier transform is bijective on $\mathcal{S}'(\R^d)$ as well.
\par
Let $m \in \N$ and
\begin{equation*}
 a(\xi) = \sum _{\abs{\alpha}_1 \leq m} a_\alpha\xi^\alpha, \quad \xi \in \R^d,
\end{equation*}
be a polynomial of degree $m$ with coefficients $a_\alpha \in \C$.
We say that the polynomial $a$ is \emph{strongly elliptic} if there exist constants $c > 0$ and $\omega  \in \R$ such
that $a$ satisfies for all $\xi \in \R^d$ the lower bound
\begin{equation}
\label{eq:strongly_elliptic}
\re a (\xi) \geq c\abs{\xi}^m - \omega .
\end{equation}
Note that strong ellipticity implies that $m$ is even.

Given a strongly elliptic polynomial $a$ and $p \in [1,\infty]$, we define the associated heat semigroup $S : [0,\infty) \to \mathcal{L} (L_p (\R^d))$ by
\begin{equation} \label{eq:semigroup}
 S_tf = \F^{-1}\euler^{-ta}\F f = \F^{-1}\euler^{-ta}*f .
\end{equation}
Note that the second equality holds since $\euler^{-ta} \in \mathcal{S}(\R^d)$. It is well known that the operator semigroup $(S_t)_{t\geq 0}$ is strongly continuous if $p \in [1,\infty)$. For $p=\infty$ the semigroup is the dual semigroup of a strongly continuous semigroup on $L_1(\R^d)$ and hence it is only weak*-continuous in general. For details we refer, e.g., to \cite{Arendt-02}.
By \cite{TerElstR-96}, the integral kernel $k_t = \mathcal{F}^{-1} \euler^{-ta}$ satisfies the following heat kernel estimate: There exist $c_1,c_2>0$ such that for all $x\in \R^d$ and $t>0$ we have 
\begin{equation}\label{eq:kernelbound}
\lvert k_t(x)\rvert \leq c_1 \euler^{\omega t} t^{-d/m} \euler^{-c_2\left(\frac{\lvert x \rvert^m}{t}\right)^{\frac{1}{m-1}}}.
\end{equation}
This implies that there is $M \geq 1$ and $\omega \in \R$ such that for all $p\in [1,\infty]$, $f\in L_p(\R^d)$, and $t \geq 0$ we have 
\begin{equation} \label{eq:realpart}
\lVert S_t f\rVert_{L_p(\R^d)} \le \lVert k_t \rVert_{L_1(\R^d)} \lVert f \rVert_{L_p(\R^d)} \leq M \euler^{\omega t} \lVert f \rVert_{L_p(\R^d)}.
\end{equation}
In order to formulate our main result, we introduce the notion of a thick subset $\thickset$ of $\R^d$.

\begin{Definition}
Let $\rho\in (0,1]$ and $L\in (0,\infty)^d$. A set $\thickset \subset \R^d$ is called \emph{$(\rho,L)$-thick} if $\thickset$ is measurable and for all $x \in \R^d$ we have
  \[
  \left\lvert \thickset \cap \left( \bigtimes_{i=1}^d (0,L_i) + x \right) \right\rvert \geq \rho \prod_{i=1}^d L_i .
  \]
Here, $\lvert \cdot \rvert$ denotes Lebesgue measure in $\R^d$.
\end{Definition}

The following theorem yields a final-state observability estimate for $(S_t)_{t\geq0}$ on thick sets.

\begin{Theorem}\label{thm:obs} 
Let $m \in \N$, $a\from \R^d \to \C$ a strongly elliptic polynomial of order $m$, $c>0$ and $\omega\in\R$ as in \eqref{eq:strongly_elliptic}, and $(S_t)_{t\geq0}$ as in \eqref{eq:semigroup}. Let $\rho\in(0,1]$, $L \in (0,\infty)^d$,
$\thickset \subset \R^d$ a $(\rho,L)$-thick set, $p,r \in [1,\infty]$, and $T>0$. 
Then we have for all $f \in L_p(\R^d)$
\begin{equation*}
	\norm{S_T f}_{L_p(\R^d)} \leq \begin{cases}
    \displaystyle{C_{\mathrm{obs}} \left(\int_0^T \norm{(S_t f)|_\thickset}_{L_p(\thickset)}^r \drm t\right)^{1/r}} & \text{if } r\in [1,\infty),\\
    \displaystyle{ C_{\mathrm{obs}}\esssup\limits_{t\in [0,T]} \norm{(S_t f)|_\thickset}_{L_p(\thickset)}} & \text{if } r=\infty,
 \end{cases} 
\end{equation*}
where 
\[
C_{\mathrm{obs}} 
 =
 \frac{K_a}{T^{1/r}} \left( \frac{K_d}{\rho} \right)^{K_d(1+\lvert L \rvert_1 \lambda^*)}  \exp \left(\frac{K_m (\lvert L \rvert_1 \ln (K_d / \rho))^{m/(m-1)}}{(cT)^{1 / (m - 1)}} +   K\max\{\omega,0\} T\right).
\]
Here,  $\lambda^* = (2^{m+3} \max \{\omega , 0\} / c)^{1/m}$, $K>0$ is an absolute constant, and $K_a, K_d, K_m > 0$ are constants depending only on the polynomial $a$, on $d$, or on $m$, respectively.
\end{Theorem}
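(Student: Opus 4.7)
The plan is to verify the two hypotheses of the abstract observability result (Theorem \ref{thm:spectral+diss-obs}) for the semigroup $(S_t)_{t\geq 0}$ and then read off the explicit constant. The required ingredients are a spectral family $\{P_\lambda\}_{\lambda > 0}$ (Fourier truncations, i.e.\ $P_\lambda$ is the Fourier multiplier with a smooth bump supported in the frequency ball of radius $\sim \lambda$), together with an \emph{uncertainty principle} controlling $\|P_\lambda f\|_{L_p(\R^d)}$ by $\|\mathbf{1}_\thickset P_\lambda f\|_{L_p(\thickset)}$, and a \emph{dissipation estimate} bounding $\|(I - P_\lambda) S_t f\|_{L_p(\R^d)}$ by an exponentially decaying factor times $\|f\|_{L_p(\R^d)}$.

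For the uncertainty principle I would invoke the Logvinenko--Sereda theorem in its Kovrijkine--Egidi--Veseli\'c form on $(\rho,L)$-thick sets: for every $f \in L_p(\R^d)$ with Fourier support in a cube of side $\sim \lambda$,
\[
\|f\|_{L_p(\R^d)} \leq \left(\frac{K_d}{\rho}\right)^{K_d(1 + \lvert L \rvert_1 \lambda)} \|\mathbf{1}_\thickset f\|_{L_p(\thickset)}.
\]
This inequality is uniform in $p \in [1,\infty]$, which is precisely what enables the endpoint $p=1$ (and $p=\infty$ by duality). For the dissipation estimate I would use the convolution representation $S_t f = k_t * f$ together with the heat kernel bound \eqref{eq:kernelbound}. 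Decomposing in frequency and using the strong ellipticity \eqref{eq:strongly_elliptic} to bound $\euler^{-ta(\xi)}$ by $\euler^{\omega t}\euler^{-ct|\xi|^m}$ on the region $|\xi| \geq \lambda$, one obtains, by Young's inequality applied to the tail multiplier,
\[
\|(I - P_\lambda) S_t f\|_{L_p(\R^d)} \leq d_1 \euler^{\omega t} \euler^{-d_2 t \lambda^m} \|f\|_{L_p(\R^d)}
\]
for suitable $d_1, d_2 > 0$ depending only on $a$ and $d$.

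With both inputs in hand, the abstract theorem from the appendix internally performs the telescoping between high- and low-frequency parts: at each scale the uncertainty principle costs $\exp(\alpha \lambda)$ while the dissipation gains $\exp(-\beta t \lambda^m)$, and balancing these yields an optimal cutoff $\lambda_T \sim (\lvert L \rvert_1 \ln(K_d/\rho) /(cT))^{1/(m-1)}$ and the exponential factor $\exp(K_m (\lvert L \rvert_1 \ln(K_d/\rho))^{m/(m-1)}/(cT)^{1/(m-1)})$ in $C_{\mathrm{obs}}$. The threshold $\lambda^* = (2^{m+4}\max\{\omega,0\}/c)^{1/m}$ appears because below it the strong ellipticity bound \eqref{eq:strongly_elliptic} does not yet dominate the shift $\omega$; the abstract machinery needs the dissipation to be genuinely decaying, so one pays the uncertainty constant once at scale $\lambda^*$, producing the factor $(K_d/\rho)^{K_d(1+\lvert L \rvert_1 \lambda^*)}$. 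The prefactor $K_a / T^{1/r}$ comes from the $L_r$-in-time norm in the observability inequality combined with \eqref{eq:realpart}, and the factor $\exp(K\max\{\omega,0\}T)$ from propagating the growth bound over $[0,T]$.

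The main obstacle will be establishing the uncertainty principle uniformly in $p\in[1,\infty]$ with the correct quantitative dependence on $\rho$ and $\lvert L \rvert_1$. Kovrijkine's argument via Bernstein-type inequalities and local $L_p$ comparison on cubes does extend to all $p$, but the bookkeeping of constants needs to be tracked carefully so that they match the form $(K_d/\rho)^{K_d(1 + \lvert L\rvert_1 \lambda)}$. Once this is secured, the dissipation estimate is a straightforward consequence of \eqref{eq:kernelbound}, and the remaining combinatorics is packaged in Theorem \ref{thm:spectral+diss-obs}.
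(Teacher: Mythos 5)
Your overall architecture matches the paper's: the same Fourier truncations $P_\lambda$, the Logvinenko--Sereda theorem for the uncertainty principle, and the abstract Theorem~\ref{thm:spectral+diss-obs} to combine the two estimates. The gap is in the dissipation estimate, which is the actual technical content of the proof. You propose to bound $\euler^{-ta(\xi)}$ pointwise by $\euler^{\omega t}\euler^{-ct\lvert\xi\rvert^m}$ on $\lvert\xi\rvert\geq\lambda$ and then conclude ``by Young's inequality applied to the tail multiplier.'' Young's inequality requires an $L_1(\R^d)$ bound on the \emph{kernel} $\F^{-1}\bigl((1-\chi_\lambda)\euler^{-ta}\bigr)$, and a pointwise bound on the symbol does not control the $L_1$ norm of its inverse Fourier transform; it only controls the $L_2\to L_2$ operator norm via Plancherel. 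Since the whole point here is $p\in[1,\infty]$ (in particular $p=1$ and $p=\infty$), this step does not close. Likewise, your remark that the dissipation estimate is ``a straightforward consequence of \eqref{eq:kernelbound}'' cannot be right: \eqref{eq:kernelbound} concerns the full kernel $k_t$ and carries no information about the frequency cutoff, so it cannot produce the decay $\euler^{-d_3 t\lambda^m}$ in $\lambda$.

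What is actually needed, and what the paper does, is a weighted kernel estimate: writing $k_\mu=\F^{-1}\bigl((1-\chi_\mu)\euler^{-\lvert\cdot\rvert^m}\bigr)$, one integrates by parts $d+1$ times (equivalently, bounds $\lvert x^\alpha k_\mu(x)\rvert$ via $\F^{-1}D_\xi^\alpha[\cdots]$ for $\lvert\alpha\rvert_1\leq d+1$) to get $\lvert x\rvert^{d+1}\lvert k_\mu(x)\rvert\leq K_{m,d}\euler^{-\mu^m/2^{m+2}}$, which then yields $\lVert k_\mu\rVert_{L_1}\leq K_{m,d}\euler^{-\mu^m/2^{m+2}}$; the exponential gain comes from the support of $1-\chi_\mu$ away from the origin. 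Moreover, this computation is done only for the pure power $\lvert\xi\rvert^m$, where the scaling $\mu=t^{1/m}\lambda$ makes it tractable; the general strongly elliptic $a$ is handled by factoring $S_t=T_tG_{(c/2)t}$ with $\tilde a=(c/2)\lvert\cdot\rvert^m$, checking that $a-\tilde a$ is still strongly elliptic, and invoking the Ter Elst--Robinson heat kernel bound to get $\lVert T_t\rVert\leq\tilde M\euler^{\omega t}$. You would need to supply this (or an equivalent symbol-class argument controlling $d+1$ derivatives of $(1-\chi_\lambda)\euler^{-ta}$ uniformly in $t$ and $\lambda$) for the proof to go through. A minor further omission: applying Theorem~\ref{thm:spectral+diss-obs} for $p=\infty$ requires checking measurability of $t\mapsto\lVert(S_tf)|_\thickset\rVert_{L_\infty(\thickset)}$, since the semigroup is not strongly continuous there.
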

 \begin{Remark}[Optimality of $C_{\mathrm{obs}}$]
  In this remark we discuss the optimality of $C_{\mathrm{obs}}$ with respect to the parameters $T > 0$ and $L \in (0,\infty)^d$. 
\par
  We begin with optimality with respect to the time parameter $T$. Assume that $E \subset \R^d$ is a thick set, $r=2$, and the generator $-A_2$ of the semigroup $(S_t)_{t \geq 0}$ is a self-adjoint operator in $L_2 (\R^d)$ with $\min \sigma (A_2) = 0$. This is, e.g., the case if the strongly elliptic polynomial is given by $a (\xi) = \lvert \xi \rvert^2$. Then, the generator $-A_2$ of the semigroup $(S_t)_{t \geq 0}$ is the (self-adjoint) Laplacian on $L_2 (\R^d)$.
  Let us define the optimal observability constant by
  \[
  C_{\mathrm{obs}}^* (T) := \sup_{\genfrac{}{}{0pt}{1}{f \in L_2 (\R^d)}{f \not = 0}} \frac{\lVert S_T f \rVert_{L_2 (\R^d)}}{\left( \int_0^T \lVert S_t f|_E \rVert^2_{L_2 (E)} \drm t \right)^{1/2}} .
  \]
  An upper bound on $C_{\mathrm{obs}}^* (T)$ is given in Theorem~\ref{thm:obs}.
  As $\min \sigma (A_2) = 0$, for $\varepsilon>0$ we can choose a function $0\neq f_\varepsilon \in L_2(\R^d)$ from the spectral subspace of the interval $[0,\varepsilon]$, and calculate, using spectral calculus,
  \[
   C_{\mathrm{obs}}^* (T) 
   \geq 
   \frac{\euler^{-\varepsilon T} \lVert f_\varepsilon \rVert_{L_2 (\R^d)}}{\left( \int_0^T \lVert f_\varepsilon \rVert_{L_2 (\R^d)}^2 \drm t \right)^{1/2}} = \frac{\euler^{-\varepsilon T}}{T^{1/2}} .
  \]
  As $\varepsilon > 0$ was arbitrary, our $C_{\mathrm{obs}}$ in Theorem~\ref{thm:obs} is optimal with respect to $T$ for large $T$. We refer to \cite[Theorem~2.13]{NakicTTV-20} for a more general statement.
 In order to see that our bound is optimal for small $T$ as well, we might argue as follows. If one considers a linear control problem in $L_2 (\Omega)$ with bounded open $\Omega\subset\R^d$ instead of $\R^d$ and with $(S_t)_{t \geq 0}$ being the heat-semigroup, it has been shown that
\begin{equation} \label{eq:Zuazua-lower}
 \sup_{\overline{B (\rho)} \subset \Omega \setminus E} \rho^2 / 4 \leq \liminf_{T \to 0} T \ln C_{\mathrm{obs}}^* (T)   ,
\end{equation}
see \cite{Fernandez-CaraZ-00,Zuazua-01,Miller-04}. This shows that the exponential blowup for $T \to 0$ has to occur for the controlled heat equation on bounded open subsets $\Omega\subset\R^d$. In order to extend \eqref{eq:Zuazua-lower} to the case $\Omega = \R^d$ it seems feasible to apply the method obtained in \cite{SeelmannV-20}. In that paper, the authors show that if the controlled heat equation on $\Omega = \Lambda_L = (-L/2 , L/2)^d$ satisfies an observability estimate with a constant independent of $L>0$, then, using a continuity argument, the corresponding system on $\Omega = \R^d$ satisfies an observability estimate as well with the same upper bound. By an analogous argument, the lower bound \eqref{eq:Zuazua-lower} should hold in the case $\Omega = \R^d$ as well. This suggests that $C_{\mathrm{obs}}$ in Theorem~\ref{thm:obs} is optimal also in the regime $T \to 0$.
%
\par
Let us now turn to a discussion of the optimality with respect to $L\in (0,\infty)^d$. We consider the case $r=p=2$ and $(S_t)_{t \geq 0}$ being the heat-semigroup. As above, we assume that the lower bound \eqref{eq:Zuazua-lower} holds in the case $\Omega = \R^d$. Then Theorem~\ref{thm:obs} implies (using $c = 1$ and $\omega = 0$)
\begin{equation} \label{eq:geometry1}
 \liminf_{T \to 0} T \ln C_{\mathrm{obs}}  =K_m \bigl(\lvert L \rvert_1 \ln (K_d / \rho) \bigr)^2
\end{equation}
Now we consider an example from \cite[Remark~4.14]{NakicTTV-20}. Let  $d = 2$, $l > 0$ and
\[
 E = \bigcup_{k \in l\Z} \left( (k-l/2 , k) \times \R \right) .
\]
The set $E$ is $(1/2 , L)$-thick with $L = (l,l)$ and the left hand side of \eqref{eq:Zuazua-lower} is given by
\begin{equation} \label{eq:geometry2}
  \sup_{\overline{B (\rho)} \subset \R^2 \setminus E} \rho^2 / 4 = \frac{l^2}{64} .
\end{equation}
From \eqref{eq:geometry1} and \eqref{eq:geometry2} we conclude  that $C_{\mathrm{obs}}$ in Theorem~\ref{thm:obs} is optimal with respect to $L$.
 \end{Remark}

 For $p\in [1,\infty)$ let $-A_p$ be the generator of the $C_0$-semigroup $(S_t)_{t\geq 0}$ on $L_p (\R^d)$. Note that for all $f\in \mathcal{S}(\R^d)$ we have
\[
A_p f = \sum _{\abs{\alpha}_1 \leq m} a_\alpha (-\ii)^{|\alpha|} \partial^\alpha f  .
\]
Moreover, by \eqref{eq:strongly_elliptic}, the differential operator $A_p$ is strongly elliptic, i.e., there is $c>0$ such that 
$$\re\Big(\sum _{\abs{\alpha}_1 = m} a_\alpha\xi^\alpha\Big) \geq c |\xi|^m.$$
Then, the statement of Theorem~\ref{thm:obs} corresponds to a final-state observability estimate for the system 
\begin{equation*}
\begin{aligned}
  \dot{x}(t) & = -A_p x(t), \quad & &t\in (0,T] ,\quad x(0)  = x_0 \in L_p(\R^d), \\
  y(t)  &= x(t)|_\thickset, \quad   & & t\in [0,T].
  \end{aligned}
\end{equation*}
\par
Let us now turn to the discussion on null-controllability. For a measurable set $\thickset\subset \R^d$ and $T>0$ we consider the linear control problem 
\begin{align*}
  \dot{x}(t) & = -A_p x(t) + \1_\thickset u(t), \quad t \in (0,T], \quad x(0) = x_0 \in L_p(\R^d)
\end{align*}
where $u\in L_r ((0,T);L_p(\thickset))$ with $r\in[1,\infty]$. The unique mild solution
is given by Duhamel's formula
\begin{equation} \label{eq:Duhamel}
x(t) = S_t x_0 + \mathcal{B}^t u, \quad\text{where}\quad \mathcal{B}^t u = \int_0^t S_{t-\tau} \1_\thickset u(\tau) \drm\tau .
\end{equation}
By Theorem~\ref{thm:obs} and duality, we obtain (cost-uniform approximate) null-controllability for \eqref{eq:system-intro}.
%
\begin{Theorem} \label{thm:null-control}
 Let $m \in \N$, $a\from \R^d \to \C$ a strongly elliptic polynomial of order $m$, $c>0$ and $\omega\in\R$ as in \eqref{eq:strongly_elliptic}, and $-A_p$ the generator of the $C_0$-semigroup $(S_t)_{t\geq0}$ as in \eqref{eq:semigroup}. Let $\rho\in(0,1]$, $L \in (0,\infty)^d$, $\thickset \subset \R^d$ a $(\rho,L)$-thick set, $r \in [1,\infty]$, and $T>0$. 
\begin{enumerate}[(a)]
 \item For any $x_0 \in L_1 (\R^d)$ and any $\varepsilon > 0$ there exists $u \in L_r ((0,T);L_1 (E))$ with
$$\|u\|_{L_r((0,T);L_1(\thickset))} \le C_{\mathrm{obs}}\|x_0\|_{L_1(\R^d)} \quad \text{and} \quad \lVert x(T) \rVert_{L_1(\R^d)} < \varepsilon,$$
where $x$ is the solution of \eqref{eq:system-intro} given by \eqref{eq:Duhamel}.
\item Let $p \in (1,\infty)$. Then for any $x_0 \in L_p (\R^d)$ there exists $u \in L_r ((0,T);L_p (E))$ with 
$$\|u\|_{L_r((0,T);L_p(\thickset))} \le C_{\mathrm{obs}}\|x_0\|_{L_p(\R^d)} \quad \text{and} \quad x(T) =0,$$
where $x$ is the solution of \eqref{eq:system-intro} given by \eqref{eq:Duhamel}.
\end{enumerate}
Here, $C_{\mathrm{obs}}$ is as in Theorem~\ref{thm:obs} with $r$ replaced by $r'$ where $r' \in [1,\infty]$ such that $1/r + 1/r' = 1$.
\end{Theorem}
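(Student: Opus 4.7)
The plan is to invoke the observability-to-controllability duality (Douglas' lemma, cited at the end of Section~1): Theorem~\ref{thm:null-control} should follow from Theorem~\ref{thm:obs} applied on the dual side with exponents $p',r'$. The strategy has three steps: (i) identify the adjoint semigroup and the adjoint of $\1_\thickset$, (ii) apply Theorem~\ref{thm:obs} on $L_{p'}(\R^d)$ to the adjoint, and (iii) convert the resulting observability estimate back into controllability, distinguishing the reflexive case $p \in (1,\infty)$ from the non-reflexive case $p = 1$.

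First, I would identify the adjoint. For $f \in L_p(\R^d)$ and $g \in L_{p'}(\R^d)$, Fubini applied to the convolution representation \eqref{eq:semigroup} gives $\dupa{S_t f}{g} = \dupa{f}{\tilde S_t g}$, where $\tilde S_t$ is convolution with the kernel $x \mapsto k_t(-x)$. Equivalently, $\tilde S_t$ is the semigroup \eqref{eq:semigroup} associated with the polynomial $\tilde a(\xi) := a(-\xi)$. Since $\re \tilde a(\xi) = \re a(-\xi) \geq c \abs{\xi}^m - \omega$, the polynomial $\tilde a$ is strongly elliptic with the same constants $c$, $\omega$ as in \eqref{eq:strongly_elliptic}, so Theorem~\ref{thm:obs} applies to $(\tilde S_t)_{t \geq 0}$ on $L_{p'}(\R^d)$ with the constant $C_{\mathrm{obs}}$ from the statement of Theorem~\ref{thm:null-control}. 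The adjoint of the embedding $\1_\thickset \from L_p(\thickset) \to L_p(\R^d)$ is the restriction map $g \mapsto g|_\thickset$ from $L_{p'}(\R^d)$ to $L_{p'}(\thickset)$.

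Next, Theorem~\ref{thm:obs} applied to $\tilde a$ with exponents $p', r'$ yields, for every $\varphi \in L_{p'}(\R^d)$, the observability estimate
\[
\norm{\tilde S_T \varphi}_{L_{p'}(\R^d)} \leq C_{\mathrm{obs}} \left(\int_0^T \norm{(\tilde S_t \varphi)|_\thickset}_{L_{p'}(\thickset)}^{r'} \drm t\right)^{1/r'}
\]
(and the analogous essential-supremum bound if $r' = \infty$). Let $\Phi_T \from L_r((0,T); L_p(\thickset)) \to L_p(\R^d)$, $\Phi_T u := \int_0^T S_{T-t} \1_\thickset u(t) \drm t$, denote the input-to-state map. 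Its Banach space adjoint sends $\varphi$ to the function $t \mapsto (\tilde S_{T-t} \varphi)|_\thickset$ in $L_{r'}((0,T); L_{p'}(\thickset))$, so after the substitution $s = T - t$ the display above is exactly $\norm{S_T' \varphi} \leq C_{\mathrm{obs}} \norm{\Phi_T' \varphi}$.

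For part~(b) with $p \in (1,\infty)$, $L_p(\R^d)$ is reflexive and the classical form of Douglas' lemma \cite{Douglas-66,Embry-73} converts this range-norm inequality into exact null-controllability with the asserted bound $\norm{u}_{L_r((0,T); L_p(\thickset))} \leq C_{\mathrm{obs}} \norm{f}_{L_p(\R^d)}$. For part~(a) with $p = 1$, the dual is $L_\infty(\R^d)$ on which $\tilde S_t$ is only weak$^*$-continuous; here the Banach space versions of Douglas' lemma in \cite{Carja-88,Forough-14} guarantee instead that $-S_T f$ lies in the norm-closure of $\ran \Phi_T$ with the same control cost, which is precisely approximate null-controllability. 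The main obstacle is the $p = 1$ case: the non-reflexive form of duality must be invoked carefully, since exact null-controllability in $L_1$ can genuinely fail, and the weak$^*$-measurability of $t \mapsto (\tilde S_t \varphi)|_\thickset$ (which follows from the explicit kernel bound \eqref{eq:kernelbound}) has to be verified to interpret the observability integral on the non-strongly-continuous dual side.
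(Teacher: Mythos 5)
Your proposal is correct and follows essentially the same route as the paper: dualize, observe that $(S_t')_{t\ge0}$ is the semigroup associated with the strongly elliptic symbol $a(-\cdot)$ with the same constants $c$, $M$, $\omega$, apply Theorem~\ref{thm:obs} with exponents $p'$, $r'$, and invoke Douglas' lemma, taking the norm closure of the range of the input map when $p=1$. The one point you gloss over is that for $p=1$ the dual of $L_r((0,T);L_1(E))$ is not the Bochner space $L_{r'}((0,T);L_\infty(E))$; the paper sidesteps this by citing Vieru's formula, which gives the norm of $(\mathcal{B}^T)'g$ as an element of $L_r((0,T);L_p(E))'$ directly as the $L_{r'}$-integral of $t\mapsto\lVert (S_t'g)|_E\rVert_{L_{p'}(E)}$.
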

The statement (a) of Theorem~\ref{thm:null-control} corresponds to \emph{cost-uniform approximate null-controllability in time $T$}, whereas the statement (b) corresponds to \emph{null-controllability in time $T$}.
Note that in case $p\in (1,\infty)$ null-controllability and cost-uniform approximate null-controllability are equivalent, see, e.g.,\ \cite{Carja-88}.

It is a standard duality argument that Theorem~\ref{thm:null-control} follows from Theorem~\ref{thm:obs} by means of Douglas' lemma.
For the sake of completeness, we give a short proof.
\begin{proof}[Proof of Theorem~\ref{thm:null-control} (assuming Theorem~\ref{thm:obs})]
 Let $p \in [1,\infty)$ and $r \in [1,\infty]$. Moreover, let $\mathcal{B}^T \colon \allowbreak L_r ((0,T); L_p (E)) \allowbreak \to L_p (\R^d)$ be given by
\begin{equation*} 
 \mathcal{B}^T u = \int_0^T S_{T-t} \1_E u (t)  \drm\tau .
\end{equation*}
Then, by \cite[Theorem 2.1]{Vieru-05} we have for all $g \in L_{p'}(\R^d)$
\[
 \lVert (\mathcal{B}^T)'g\rVert_{L_{r} ((0,T); L_{p} (E))'} 
 =
 \sup_{\tau\in [0,T]} \lVert (S'_{T-\tau} g) |_E\rVert_{L_{p'} (E)} = \sup_{t\in [0,T]} \lVert (S'_t g)|_E \rVert_{L_{p'} (E)}
 \]
 if $r = 1$, and
 \[
  \lVert (\mathcal{B}^T)'g \rVert_{L_{r} ((0,T); L_{p} (E))'} 
 =
 \left( \int_0^T \lVert (S'_{t-\tau} g)|_E \rVert_{L_{p'} (E)}^{r'} \drm \tau \right)^{1/{r'}} 
 = 
 \left( \int_0^T \lVert (S'_t g)|_E \rVert_{L_{p'} (E)}^{r'} \drm t\right)^{1/{r'}}
 \]
 if $r \in (1,\infty]$, where $r' \in [1,\infty]$ is such that $1 / r  + 1/ r' = 1$ and $p' \in (1,\infty]$ is such that $1 / p  + 1/ p' = 1$. Since $\mathcal{F}S_t' = \euler^{-ta(-\cdot)}\mathcal{F}$, we have that $(S_t')_{t\geq0}$ is associated to the symbol $a(-\cdot)$ which is strongly elliptic with the same constant $c>0$. Moreover, since the associated heat kernel is given by $(\mathcal{F}^{-1}\euler^{-ta})(-\cdot)$, we have $\norm{S_t'} \leq M\euler^{\omega t}$ with the same $M$ and $\omega$ as in \eqref{eq:realpart}. Thus, Theorem~\ref{thm:obs} and the above equalities imply for all $g \in L_{p'} (\R^d)$
 \begin{equation*}
  \lVert S_T' g \rVert_{L_{p'}} \leq C_{\mathrm{obs}} \lVert (\mathcal{B}^T)' g \rVert_{L_{r'} ((0,T);L_{p'} (E))} = C_{\mathrm{obs}} \lVert (\mathcal{B}^T)' g \rVert_{L_{r} ((0,T);L_{p} (E))'},
 \end{equation*}
 where $C_{\mathrm{obs}}$ is as in Theorem~\ref{thm:obs} with $r$ replaced by $r'$.
 By Douglas' lemma, see e.g.\ \cite{Harte-78,Carja-85,Carja-88}, we conclude
 \[
  \{S_T x_0 \colon \lVert x_0 \rVert_{L_p (\R^d)} \leq 1\}  
  \subset
  \overline{\{ \mathcal{B}^T u \colon \lVert u \rVert_{L_r ((0,T);L_p (E))} \leq C_{\mathrm{obs}} \}}
  \quad \text{if} \quad p = 1
 \]
 and
\[
  \{S_T x_0 \colon \lVert x_0 \rVert_{L_p (\R^d)} \leq 1\}  
  \subset
  \{ \mathcal{B}^T u \colon \lVert u \rVert_{L_r ((0,T);L_p (E))} \leq C_{\mathrm{obs}} \}
  \quad \text{if} \quad p \in (1,\infty) .
 \]
By scaling and linearity, this implies the statement of the theorem. 
\end{proof}
\section{Proof of Theorem~\ref{thm:obs}}
\label{sec:dissipation}

For the proof of Theorem~\ref{thm:obs} we apply the abstract observability estimate in Theorem~\ref{thm:spectral+diss-obs}. For this purpose, we define a familiy of operators $P_\lambda$, and verify the uncertainty principle \eqref{eq:ass:uncertainty} and the dissipation estimate \eqref{eq:ass:dissipation}.

We start with defining the operators $P_\lambda$ as in \cite{GallaunST-20}.
Let $\eta\in C_{\mathrm c}^\infty ([0,\infty) )$ with $0\leq\eta\leq 1$ such that $\eta (r) = 1$ for $r\in [0,1/2]$ and $\eta (r) = 0$ for $r\geq 1$. 
For $\lambda > 0$ we define $\chi_\lambda\from \R^d\to \R$ by $\chi_\lambda (\xi) = \eta (\lvert \xi \rvert / \lambda)$. Since $\chi_\lambda \in \mathcal{S}(\R^d)$, we have $\mathcal{F}^{-1}\chi_\lambda \in \mathcal{S}(\R^d) \subset L_1(\R^d)$ and for all $p \in [1,\infty]$ we define $P_\lambda \from  L_p(\R^d) \to L_p(\R^d)$ by $P_\lambda f = (\mathcal{F}^{-1}\chi_\lambda) * f$.
By Young's inequality we have for all $f\in L_p(\R^d)$
\[
\lVert P_\lambda f \rVert_{L_p(\R^d)} = \lVert (\mathcal{F}^{-1} \chi_\lambda) \ast f \rVert_{L_p(\R^d)} \leq \lVert \mathcal{F}^{-1} \chi_\lambda \rVert_{L_1(\R^d)} \lVert f \rVert_{L_p(\R^d)}.
\]
Moreover, the norm $\lVert \mathcal{F}^{-1} \chi_\lambda \rVert_{L_1(\R^d)}$ is independent of $\lambda >0$. Indeed, by the scaling property of the Fourier transform and by change of variables we have for all $\lambda > 0$
\begin{align} \label{eq:IndependedOfLambda}
\lVert \mathcal{F}^{-1} \chi_\lambda \rVert_{L_1(\R^d)} 
= 
\lvert \lambda \rvert^d \lVert (\mathcal{F}^{-1} \chi_1) (\lambda \cdot)\rVert_{L_1(\R^d)}
= 
\lVert \mathcal{F}^{-1} \chi_1 \rVert_{L_1(\R^d)}.
\end{align}

Next, we observe that the uncertainty principle \eqref{eq:ass:uncertainty} is a consequence of the following Logvinenko--Sereda theorem from \cite{Kovrijkine-01}, see also \cite{LogvinenkoS-74, Kovrijkine-00} for predecessors.
\begin{Theorem}[Logvinenko--Sereda theorem]
	\label{Thm:Logvinenko-Sereda_Rd}
	There exists $K\geq 1$ such that for all $p\in [1,\infty]$, $\lambda > 0$, $\rho \in (0,1]$, $L \in (0,\infty)^d$, $(\rho,L)$-thick sets $\thickset \subset \R^d$, and $f\in L_p(\R^d)$ satisfying $\supp \F f \subset [-\lambda,\lambda]^d$ we have
	\[
	\norm{f}_{L_p(\R^d)} \leq  d_0 \euler^{d_1\lambda}  \norm{f}_{L_p(\thickset)} ,
	\]
	where
	\begin{equation} \label{eq:d0d1}
	d_0 = \euler^{K d \ln (K^d / \rho)}
	\quad\text{and}\quad
	d_1 = 2 \lvert L \rvert_1 \ln (K^d / \rho) .
	\end{equation}
\end{Theorem}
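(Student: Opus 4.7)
The plan is to follow Kovrijkine's sharp version of the Logvinenko--Sereda theorem from \cite{Kovrijkine-01}, adapted to the anisotropic thick-set setting.

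First I reduce to unit cubes via the anisotropic dilation $\tilde f(x) = f(L_1 x_1, \ldots, L_d x_d)$, which turns $\thickset$ into a $(\rho,(1,\ldots,1))$-thick set $\tilde\thickset$ and moves $\supp \F \tilde f$ into the rectangle $R = \bigtimes_{i=1}^d [-\Lambda_i, \Lambda_i]$ with $\Lambda_i = \lambda L_i$. Both sides of the desired inequality scale by the same factor $(L_1 \cdots L_d)^{1/p}$, so it suffices to prove the estimate for $\tilde f$ on $\tilde\thickset$.

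Next, I partition $\R^d$ into unit cubes $I_k = k + [0,1]^d$, $k\in\Z^d$, and fix an integer parameter $N$ of order $1 + \sum_i \Lambda_i = 1 + \lambda \lvert L \rvert_1$. A cube $I_k$ is declared \emph{good} if for every multi-index $\alpha$
\[
\int_{I_k} \lvert \partial^\alpha \tilde f \rvert^p \drm x \leq B^{\lvert \alpha \rvert_1 p} \Bigl(\prod_{i=1}^d \Lambda_i^{\alpha_i}\Bigr)^p 2^{\lvert \alpha \rvert_1} \int_{I_k} \lvert \tilde f \rvert^p \drm x,
\]
where $B$ is a sufficiently large absolute constant. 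The global Bernstein inequality $\norm{\partial^\alpha \tilde f}_{L_p(\R^d)} \leq B_0^{\lvert \alpha \rvert_1} \prod_i \Lambda_i^{\alpha_i} \norm{\tilde f}_{L_p(\R^d)}$, valid for functions with $\supp \F \tilde f \subset R$, combined with Chebyshev's inequality and a summation over $\alpha$, shows that the $L_p$-mass on the bad cubes is at most a small fixed fraction of $\norm{\tilde f}_{L_p(\R^d)}^p$. Hence it is enough to estimate $\norm{\tilde f}_{L_p(I_k)}$ by $\norm{\tilde f}_{L_p(I_k \cap \tilde\thickset)}$ on each good cube.

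On a good cube I would pick a near-extremal point $x^* \in I_k$ for $\lvert \tilde f \rvert^p$, Taylor-expand $\tilde f$ around $x^*$ to order $N$ in each coordinate direction, and use the good-cube derivative bounds to dominate the Taylor remainder: the decisive factor is $(C \Lambda_i)^N / N!$, which is tiny as soon as $N$ exceeds a fixed multiple of $\Lambda_i$. The main term is a polynomial of coordinate-degree $\leq N$, to which I apply the classical one-dimensional Remez inequality $\sup_{[0,1]} \lvert p \rvert \leq (4/\rho)^N \sup_A \lvert p \rvert$ for measurable $A \subset [0,1]$ with $\lvert A \rvert \geq \rho$, iterated across the $d$ coordinate directions using the $(\rho,(1,\ldots,1))$-thickness of $\tilde\thickset$. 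This yields
\[
\int_{I_k} \lvert \tilde f \rvert^p \drm x \leq \Bigl(\frac{C}{\rho}\Bigr)^{pdN} \int_{I_k \cap \tilde\thickset} \lvert \tilde f \rvert^p \drm x,
\]
and summing over good cubes with $N \approx 1 + \lambda \lvert L \rvert_1$ produces the stated constants $d_0 = \euler^{Kd \ln(K^d/\rho)}$ and $d_1 = 2 \lvert L \rvert_1 \ln(K^d/\rho)$.

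The Remez step is the technical heart of the argument: the multivariable Taylor tail has to be controlled uniformly on $I_k$, and the anisotropy $\Lambda_i = \lambda L_i$ must be propagated through the coordinate-by-coordinate iteration so that the exponent in $d_1$ is linear in $\lvert L \rvert_1$ rather than the cruder $d \max_i L_i$. Calibrating the threshold $N$, the constant $B$ in the good-cube definition, and the bad-cube fraction so that the bad-cube contribution can be absorbed while preserving the optimal $\ln(K^d/\rho)$ dependence in both $d_0$ and $d_1$ is where most of the bookkeeping lives.
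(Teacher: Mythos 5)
A point of orientation first: the paper does not prove Theorem~\ref{Thm:Logvinenko-Sereda_Rd} at all --- it is quoted verbatim from Kovrijkine \cite{Kovrijkine-01} (with \cite{LogvinenkoS-74,Kovrijkine-00} as predecessors). Your outline is therefore not competing with an in-paper argument but with Kovrijkine's original proof, and at the level of architecture you do follow his strategy: anisotropic rescaling to unit cubes, a good/bad cube dichotomy via Bernstein plus Chebyshev, and a local Remez-type estimate on good cubes. The rescaling step and the bad-cube absorption are fine in outline.

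The genuine gap sits exactly at the step you yourself call the technical heart. Iterating the one-dimensional Remez inequality coordinate by coordinate requires a lower bound on the measure of the slices $\{t : (x_1,\dots,t,\dots,x_d)\in \tilde{\thickset}\cap I_k\}$ for (almost) every choice of the remaining coordinates, and $(\rho,(1,\dots,1))$-thickness gives no such bound: the set $\tilde{\thickset}\cap I_k$ may be concentrated so that most lines in a given coordinate direction miss it entirely. Kovrijkine's actual reduction to dimension one is different: he joins the near-extremal point $x^*$ to the points of $\tilde{\thickset}\cap I_k$ by segments, applies a Remez-type estimate \emph{for analytic functions} (not for a truncated Taylor polynomial) along each line, and integrates over directions in polar coordinates; alternatively one can invoke the multivariate Brudnyi--Ganzburg Remez inequality on the convex body $I_k$. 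Even if the coordinate-wise iteration were salvaged by a Fubini selection of good lines, it would yield a per-cube factor $(C/\rho)^{dN}$ with $N\sim 1+\lambda\lvert L\rvert_1$, i.e.\ $d_1\sim d\,\lvert L\rvert_1\ln(C/\rho)$, which for $d\ge 3$ and small $\rho$ is strictly weaker than the claimed $d_1=2\lvert L\rvert_1\ln(K^d/\rho)$; getting $\ln(K^d/\rho)$ rather than $d\ln(C/\rho)$ is precisely what the reduction-to-lines argument buys. Two further points are glossed over: the Remez inequality is a sup-norm statement, and the passage to the $L_p$ bound on a good cube (Kovrijkine's lemma on $L_p$-norms of analytic functions, covering $p=\infty$ as well) needs its own argument; and making the Chebyshev sum over all multi-indices $\alpha$ converge forces the constant $B$ in your good-cube definition to grow with $d$, which, if propagated naively through the Taylor-remainder threshold for $N$, again degrades the constants. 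So the skeleton is the right one, but the decisive local estimate is not established as written; for the purposes of this paper the clean solution is simply to cite \cite{Kovrijkine-01}, as the authors do.
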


Concerning the dissipation estimate \eqref{eq:ass:dissipation}, we prove the following Proposition.
\begin{Proposition} \label{Prop:Dissipation}
	Let $m \in \N$, $a\from \R^d \to \C$ a strongly elliptic polynomial of order $m$, $c>0$ and $\omega\in\R$ as in \eqref{eq:strongly_elliptic}, $(S_t)_{t\geq0}$ as in \eqref{eq:semigroup}, and $(P_\lambda)_{\lambda>0}$ as above. Then for all $p\in[1,\infty]$, $f\in L_p(\R^d)$, $\lambda>2^{(m+3)/m}(\max\{\omega,0\}/c)^{1/m}$, and $t\geq 0$ we have
	\begin{equation*} 
		\lVert (\operatorname{Id} - P_\lambda) S_t f \rVert_{L_p (\R^d)} \leq K_a \euler^{-2^{-m-3}c t\lambda^m} \lVert f \rVert_{L_p (\R^d)},
	\end{equation*}
	where $K_{a} \geq 0$ is a constant depending only on $a$ (and therefore also on $m$ and $d$).
\end{Proposition}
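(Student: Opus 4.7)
The plan is to split the symbol into a principal-part piece that is handled by Proposition~\ref{Prop:Diss_Laplace} and a residual multiplier that is $L_p$-bounded. Define $b(\xi) := a(\xi) - (c/2)\abs{\xi}^m$. Since $m$ must be even, $\abs{\xi}^m$ is a polynomial in $\xi$, so $b$ is a polynomial of degree at most $m$; moreover
\[
\re b(\xi) = \re a(\xi) - (c/2)\abs{\xi}^m \geq (c/2)\abs{\xi}^m - \omega,
\]
so $b$ is itself strongly elliptic with constant $c/2$ and the same lower-order bound $\omega$.

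At the level of Fourier multipliers one factors
\[
(1 - \chi_\lambda(\xi))\,\euler^{-ta(\xi)} = \euler^{-tb(\xi)} \cdot (1 - \chi_\lambda(\xi))\,\euler^{-t(c/2)\abs{\xi}^m},
\]
and since all the involved operators are convolution operators they commute. Thus $(\operatorname{Id} - P_\lambda)\, S_t = T_t\,(\operatorname{Id} - P_\lambda)\, G_{tc/2}$, where $T_t$ is the Fourier multiplier with symbol $\euler^{-tb}$ and $G_s$ is as in Proposition~\ref{Prop:Diss_Laplace}.

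For the second factor, Proposition~\ref{Prop:Diss_Laplace} yields directly
\[
\lVert (\operatorname{Id} - P_\lambda) G_{tc/2} f \rVert_{L_p(\R^d)} \leq K_{m,d}\, \euler^{-2^{-m-3} c t \lambda^m} \lVert f \rVert_{L_p(\R^d)}.
\]
For $T_t$ one invokes the heat kernel estimate \eqref{eq:kernelbound} applied to the strongly elliptic polynomial $b$: integrating the pointwise kernel bound in $x$ and applying Young's inequality gives $\lVert T_t \rVert_{L_p(\R^d) \to L_p(\R^d)} \leq M_b\, \euler^{\omega t}$ for a constant $M_b \geq 1$ depending only on $b$, hence only on $a$. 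Combining the two estimates and using $\lambda^m > 2^{m+4} \max\{\omega,0\}/c$, which is equivalent to $\omega < 2^{-m-4} c\lambda^m$, produces
\[
\omega t - 2^{-m-3} c t \lambda^m \leq -2^{-m-4} c t \lambda^m,
\]
so the desired estimate holds with $K_a := K_{m,d}\,M_b$.

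The main obstacle I anticipate is verifying the multiplier bound on $T_t$ with constants depending only on $a$. This requires that the heat kernel bound \eqref{eq:kernelbound} from \cite{TerElstR-96} genuinely applies to the shifted polynomial $b = a - (c/2)\abs{\cdot}^m$ with constants that can be tracked in terms of $a$ alone; since $b$ shares the order, the strong-ellipticity constant $c/2$, and the bound $\omega$ with the original data, this should go through, but it is the step that replaces the principal-part symmetry exploited in Proposition~\ref{Prop:Diss_Laplace} by a genuine lower-order perturbation argument.
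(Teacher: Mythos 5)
Your proposal is correct and follows essentially the same route as the paper: split off $\tilde a(\xi)=(c/2)\lvert\xi\rvert^m$, apply Proposition~\ref{Prop:Diss_Laplace} to $G_{(c/2)t}$, bound the residual semigroup $T_t$ by $M_b\euler^{\omega t}$ via the kernel estimate, and absorb $\omega$ using $\lambda^m>2^{m+4}\max\{\omega,0\}/c$; the arithmetic in your final step matches the paper's. The one point you flag as an anticipated obstacle is indeed the step the paper spells out: to run the argument of \cite[Proposition 2.1]{TerElstR-96} for $b=a-\tilde a$ with the \emph{same} growth rate $\omega$, one needs strong ellipticity of the complexified symbol, i.e.\ a bound of the form $\re b(\xi+\ii\eta)\geq (1/8)c\lvert\xi\rvert^m-\tilde\sigma\lvert\eta\rvert^m-\omega$, not merely your real-variable estimate $\re b(\xi)\geq (c/2)\lvert\xi\rvert^m-\omega$; the paper verifies this by expanding $\lvert\xi+\ii\eta\rvert^m$ with the binomial theorem and Young's inequality for products, and this is the small computation you would still need to supply.
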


\begin{proof}
	For all $f\in L_p(\R^d)$, $\lambda>0$ and $t\geq 0$ we have
	\begin{equation*}
	(\operatorname{Id} - P_\lambda) S_t f =  \F^{-1}((1 - \chi_\lambda)\euler^{-ta})\ast f ,
	\end{equation*}
	and by Young's inequality, we thus obtain
	\begin{equation*}
	\lVert (\operatorname{Id} - P_\lambda) S_t f \rVert_{L_p (\R^d)} \le  \lVert  \F^{-1}((1 - \chi_\lambda) \euler^{-ta}) \rVert_{L_1 (\R^d)} \lVert f \rVert_{L_p (\R^d)}.
	\end{equation*}
	We write
	\begin{equation*}
		k_{t,\lambda} = \F^{-1}((1 - \chi_\lambda) \euler^{-ta}).
	\end{equation*}
	By Young's inequality, \eqref{eq:IndependedOfLambda} and \eqref{eq:kernelbound}, there exists $K_a\geq 0$ such that
	\begin{align*}
	\lVert \F^{-1}(\chi_\lambda \euler^{-ta}) \rVert_{L_1 (\R^d)}
	&= \lVert \mathcal{F}^{-1} \chi_\lambda \ast \F^{-1} \euler^{-ta} \rVert_{L_1 (\R^d)}
	\leq
	\norm{\F^{-1}\chi_1}_{L_1(\R^d)}\norm{\F^{-1}e^{-ta}}_{L_1(\R^d)}\\
	&\leq K_ae^{\omega t} .
	\end{align*}
	Hence we find for all $\lambda>0$ and $t \geq 0$ the bound
	\begin{equation} \label{eq:kmu-bounded}
	\lVert k_{t,\lambda} \rVert_{L_1 (\R^d)} \leq \lVert  \mathcal{F}^{-1} \euler^{-ta}  \rVert_{L_1 (\R^d)} + \lVert  \mathcal{F}^{-1} (\chi_{\lambda} \euler^{-ta})  \rVert_{L_1 (\R^d)} \leq K_a\euler^{\omega t}.
	\end{equation}
	Setting
	\[\sigma_{t,\lambda} = ((1 - \chi_\lambda)\euler^{-ta})(t^{-1/m}\cdot) = (1-\chi_{t^{1/m}\lambda})\euler^{-ta(t^{-1/m}\cdot)} ,\]
	for $\lambda>0$ and $t>0$, by linear substitution $\eta = t^{1/m}\xi$ it follows that
	\begin{equation*}
	k_{t,\lambda}(x) = \frac{1}{(2\pi)^d}\int_{\R^d} \euler^{\ii t^{-1/m}x\cdot \eta}((1 - \chi_\lambda)\euler^{-ta})(t^{-1/m}\eta)t^{-d/m} \drm\eta = t^{-d/m}(\F^{-1}\sigma_{t,\lambda})(t^{-1/m}x) .
	\end{equation*}
	Therefore,
	\begin{equation*}
		\norm{k_{t,\lambda}}_{L_1(\R^d)} = \norm{\F^{-1}\sigma_{t,\lambda}}_{L_1(\R^d)} .
	\end{equation*}
	Let now $\lambda > \lambda_0 = 2^{(m+3)/m}(\max\{\omega,0\}/c)^{1/m}$ and $\alpha \in \N_0^d$ with $\lvert \alpha \rvert_1 \leq d+1$. By differentiation properties of the Fourier transform we have 
	\begin{equation}\label{eq:MultSatz}
	(-\ii)^{\abs{\alpha}} x^\alpha\F^{-1}\sigma_{t,\lambda}(x) = \F^{-1}(\partial^\alpha\sigma_{t,\lambda})(x),\quad x\in\R^d.
	\end{equation}
	We apply the product rule and the triangle inequality to obtain
	\begin{align} \label{eq:product-rule}
	\bigl\lvert \partial^\alpha \sigma_{t,\lambda} \bigr\rvert 
	& \leq  
	\1_{\{\abs{\xi}\geq t^{1/m}\lambda/2\}}\abs{\partial^\alpha \euler^{-ta(t^{-1/m}\cdot)}} + \sum_{\genfrac{}{}{0pt}{2}{\beta \in \N_0^d}{\beta < \alpha}} 
	\binom{\alpha}{\beta}  \bigl\lvert \partial^{\alpha - \beta} (1 - \chi_{t^{1/m}\lambda}) \bigr\rvert \bigl\lvert \partial^\beta  \euler^{-ta(t^{-1/m}\cdot)} \bigr\rvert  .
	\end{align}
	By \eqref{eq:strongly_elliptic}, for $t\geq 0$ and $\xi\in\R^d$ with $\abs{\xi}\geq \lambda/2$ we observe
	\begin{equation*} 
		\abs{\euler^{-ta(\xi)}} = \euler^{-t\re a(\xi)} \leq \euler^{\omega t}\euler^{-ct\abs{\xi}^m} \leq \euler^{\omega t}\euler^{-ct\lvert \xi \rvert^m/2} \euler^{-ct\lambda^m/2^{m+1}} .
	\end{equation*}
	Hence, by possibly increasing $K_a$, for all $\beta\in \N_0^d$ with $\beta \leq \alpha$ and all $\xi \in \R^d$ with $\abs{\xi} \geq t^{1/m}\lambda/2$ we obtain 
	\begin{align*}
	\bigl\lvert (\partial^\beta  \euler^{-ta(t^{-1/m}\cdot)})(\xi) \bigl\rvert &\leq K_a\bigl(1 +t^{(m-1)/m}\abs{\xi}^{m-1} \bigr)^{\abs{\beta}_1}  \abs{\euler^{-ta(t^{-1/m}\xi)}} \\ &\leq K_a\euler^{\omega t}\bigl(1 +t^{(m-1)/m}\abs{\xi}^{m-1} \bigr)^{\abs{\beta}_1} \euler^{-c\lvert \xi \rvert^m/2} \euler^{-ct\lambda^m/2^{m+1}}  \\
	&\leq K_a\euler^{\omega t}\bigl(1 +t^{(m-1)/m}\abs{\xi}^{m-1} \bigr)^{\abs{\beta}_1} \euler^{-c\lvert \xi \rvert^m/2} \euler^{-ct\lambda_0^m/2^{m+2}}
	\euler^{-ct\lambda^m/2^{m+2}}.
	\end{align*}
	Since, for all $\beta\in\N_0^d$ with $\beta \leq \alpha$ we have
	\begin{equation*}
		\sup_{t\geq0, \xi\in\R^d} \bigl(1 +t^{(m-1)/m}\abs{\xi}^{m-1}\bigr)^{\abs{\beta}_1}\euler^{-c\abs{\xi}^m/4} \euler^{- ct\lambda_0^m/2^{m+2}} <\infty,
	\end{equation*}
	we may again increase $K_a$ such that for all $\lambda> \lambda_0$, $t>0$,  $\beta\in\N_0^d$ with $\beta\leq \alpha$ and $\abs{\xi}\geq t^{1/m}\lambda/2$ we have
	\begin{equation}
     \label{eq:deriv_est2}
	\bigl\lvert (\partial^\beta  \euler^{-ta(t^{-1/m}\cdot)})(\xi) \bigl\rvert \leq K_a\euler^{\omega t}\euler^{-c\lvert \xi \rvert^m/4} \euler^{-ct\lambda^m/2^{m+2}}   .
	\end{equation}	
	For all $\beta\in\N_0^d$ with $\beta < \alpha$ we have for all $\lambda>0$, $t\geq 0$ and $\xi\in\R^d$ that 
	\begin{align}\label{eq:derivative_chi}
	\bigl\lvert \partial^{\alpha - \beta} (1 - \chi_{t^{1/m}\lambda})(\xi) \bigr\rvert
	&\leq (t^{1/m}\lambda)^{-\lvert \alpha - \beta \rvert_1}  (\partial^{\alpha - \beta} \chi_1)(\xi/(t^{1/m}\lambda))\mathbf{1}_{\{t^{1/m}\lambda/2 \leq \abs{\xi} \leq t^{1/m}\lambda\}}(\xi) \nonumber\\
	&\leq C(t^{1/m}\lambda)^{-\lvert \alpha - \beta \rvert_1}\mathbf{1}_{\{t^{1/m}\lambda/2 \leq \abs{\xi} \leq t^{1/m}\lambda\}}(\xi),
	\end{align}
	where $C = \max_{\beta<\alpha} \norm{\partial^{\alpha-\beta} \chi_1}_\infty$.
	Thus, from \eqref{eq:product-rule}, \eqref{eq:deriv_est2} and \eqref{eq:derivative_chi} and the definition of $\lambda_0$, we may increase $K_a$ such that for all $\lambda>\lambda_0$ and $t>0$ such that $t^{1/m}\lambda\geq 1$ and all $\xi\in\R^d$ we have
	\begin{equation*}
		\bigl\lvert \partial^\alpha\sigma_{t,\lambda}(\xi)  \bigr\rvert \leq K_a\euler^{\omega t}\euler^{-c\lvert \xi \rvert^m/4} \euler^{-ct\lambda^m/2^{m+2}} \leq K_a\euler^{-c\lvert \xi \rvert^m/4} \euler^{-ct\lambda^m/2^{m+3}} ,
	\end{equation*}
	and hence, from \eqref{eq:MultSatz}, we may increase $K_a$ such that for all $x \in \R^d$ we obtain
	\begin{equation}\label{eq:x^alphakmu}
	\lvert x^\alpha \F^{-1}\sigma_{t,\lambda} (x) \rvert 
	\leq 
	K_a \euler^{- ct\lambda^m/ 2^{m+3}} .
	\end{equation}
	In particular, for $j \in \{1,2,\ldots , d\}$ and $\alpha_j = (d+1)e_j$, where $e_j$ denotes the $j$-th canonical unit vector in $\R^d$, we obtain $\lvert x_j \rvert^{d+1} \lvert \F^{-1}\sigma_{t,\lambda} (x) \rvert \leq K_a\euler^{- ct\lambda^m/ 2^{m+3}}$, hence $\lVert x \rVert_\infty^{d+1} \lvert \F^{-1}\sigma_{t,\lambda} (x) \rvert \leq K_a\euler^{- ct\lambda^m/ 2^{m+3}}$, and consequently for all $\lambda>\lambda_0$, $t>0$ such that $t^{1/m}\lambda \geq 1$ we find for all $x\in\R^d \setminus \{0\}$
	\begin{equation}\label{eq:consequence}
	\lvert\F^{-1}\sigma_{t,\lambda} (x) \rvert \leq K_a\euler^{- ct\lambda^m/2^{m+3}}\lvert x \rvert^{-d-1}  .
	\end{equation}
	From \eqref{eq:x^alphakmu} with $\alpha = 0$ and \eqref{eq:consequence} we obtain for all $\lambda>\lambda_0$, $t>0$ such that $t^{1/m}\lambda \geq 1$ that
	\begin{align*}
	\norm{k_{t,\lambda}}_{L_1(\R^d)} = \lVert \F^{-1}\sigma_{t,\lambda} \rVert_{L_1 (\R^d)} &\leq K_a \euler^{- ct\lambda^m/ 2^{m+3}} \Bigl(\int_{\abs{x}\leq 1}  \drm x +  \int_{\abs{x}>1}  \lvert x \rvert^{-d-1}  \drm x\Bigr) \\
	& \leq K_a \euler^{- ct\lambda^m/ 2^{m+3}},
	\end{align*}
	where we again increased $K_a$ in the last estimate.
	In view of \eqref{eq:kmu-bounded}, for $\lambda>\lambda_0$, $t>0$ such that $t^{1/m}\lambda \leq 1$ we have
	\[\norm{k_{t,\lambda}}_{L_1(\R^d)} \leq K_a \euler^{\omega t} \leq K_a\euler^{\omega/\lambda_0^m} \leq K_a\euler^{\omega/\lambda_0^m} \euler^{c/2^{m+3}} \euler^{-ct\lambda^m/2^{m+3}}.\]
	Hence, incresing $K_a$ again, we finally obtain for all $\lambda > \lambda_0$ and $t>0$ that
	\begin{equation*} 
	\lVert k_{t,\lambda} \rVert_{L_1 (\R^d)} \leq K_a \euler^{-ct\lambda^m /2^{m+3}} . \qedhere
	\end{equation*}
\end{proof}

We can finally prove Theorem~\ref{thm:obs}.

\begin{proof}[Proof of Theorem~\ref{thm:obs}]
	Let $(P_\lambda)_{\lambda > 0}$ be the family of operators defined at the beginning of this section. Then we have $\supp \F(P_\lambda f) \subset [-\lambda,\lambda]^d$ for all $\lambda > 0$ and all $f \in L_p(\R^d)$. Thus, Theorem~\ref{Thm:Logvinenko-Sereda_Rd} implies that for all $f \in L_p(\R^d)$ and all $\lambda > 0$ we have
	\begin{equation*}
	\norm{P_\lambda f}_{L_p(\R^d)} \leq  d_0 \euler^{d_1\lambda}\norm{P_\lambda f}_{L_p(\thickset)} ,
	\end{equation*}
	where $d_0$ and $d_1$ are as in \eqref{eq:d0d1}. Moreover, according to Proposition~\ref{Prop:Dissipation}, for all $\lambda > \lambda^*$ and all $f \in L_p(\R^d)$ we have
	\begin{equation*}
	\norm{(I - P_\lambda)S_t f}_{L_p(\R^d)} \leq d_2\euler^{-d_3\lambda^mt}\norm{f}_{L_p(\thickset)} ,
	\end{equation*}
	where $\lambda^* := (2^{m+3} \max \{\omega , 0\} / c)^{1/m}$, $d_2 \geq 1$ depends only on the polynomial $a$, and where $d_3 := 2^{-m-3} c$.
Moreover, the function $t\mapsto \norm{(S_t f)|_\thickset}_{L_p (\thickset)}$ is Borel-measurable for all $f\in L_p(\R^d)$. Indeed, if $p \in [1,\infty)$ the semigroup $(S_t)_{t \geq 0}$ is strongly continuous and the measurability follows. If $p = \infty$ the semigroup $(S_t)_{t \geq 0}$ is the dual of a strongly continuous semigroup on $L_1(\R^d)$. By means of the Hahn--Banach theorem the function $t\mapsto \norm{(S_t f)|_\thickset}_{L_\infty (\thickset)}$ is, as the supremum of continuous functions, lower semicontinuous and hence measurable. 
	Thus, we can apply Theorem~\ref{thm:spectral+diss-obs} with $X = L_p(\R^d)$, $Y = L_p(\thickset)$, $C\from X \to Y$ given by the restriction map on $\thickset$, and obtain that the statement of the theorem holds with $C_{\mathrm{obs}}$ replaced by
	\begin{align*}
	\tilde C_{\mathrm{obs}} := \frac{C_1}{T^{1/r}} \exp \left(\frac{C_2}{T^{\frac{1}{m - 1}}} +   C_3 T\right), 
	\end{align*}
	where $T^{1/r} = 1$ if $r=\infty$, and 
	\begin{align*}
	C_1 
	&:= (4 M d_0) \max \Bigl\{\left( 4d_2 M^2   (d_0 +1) \right)^{8/(\euler \ln 2)}, \euler^{4d_1 2\lambda^*}\Bigr\}, \\
	C_2 &:= 4 \bigl(2 \cdot 8^\frac{m}{m-1} d_1^{m} / d_3 \bigr)^{\frac{1}{m-1}} , \\[1ex]
	C_3 & := \max\{\omega , 0\} \bigl(1 + 10 / (\euler \ln 2) \bigr),
	\end{align*}
	with $M$ as in \eqref{eq:realpart}. We denote by $K_d$, $K_m$, and $K_a$ positive constants which depend only on the dimension $d$, on $m$, or on the polynomial $a$, respectively. A straightforward calculation shows that
	\begin{equation*}
	C_1 \leq K_a \left( \frac{K_d}{\rho} \right)^{K_d(1+\lvert L \rvert_1 \lambda^*)}
	\quad\text{and}\quad
	C_2 \leq \frac{K_m (\lvert L \rvert_1 \ln (K_d / \rho))^{m/(m-1)}}{c^{1/(m-1)}} .
	\end{equation*}
	Thus we obtain
	\[
	\tilde C_{\mathrm{obs}} 
	\leq
	\frac{K_a}{T^{1/r}} \left( \frac{K_d}{\rho} \right)^{K_d(1+\lvert L \rvert_1 \lambda^*)}  \exp \left(\frac{K_m (\lvert L \rvert_1 \ln (K_d / \rho))^{m/(m-1)}}{(cT)^{1 / (m - 1)}} + C_3 T\right) =: C_{\mathrm{obs}} . \qedhere
	\]
\end{proof}

\section{Discussion on related questions and further research}
\label{sec:further_directions}

The focus of the whole paper is put more or less on observability and cost-uniform (approximate) null-controllability for linear control problems in Banach spaces. In particular, we study controllability to one single state at a given fixed time. In this section, we discuss related questions and possible generalizations, in particular, we address
%
\begin{enumerate}[(i)]
 \item control to a given trajectory defined on a time interval;
 \item possible generalization to non-linear problems;
 \item how to determine a control function.
\end{enumerate}

\subsection{Control to trajectories}
 We consider parabolic control systems on $L_p(\R^d)$, $p\in [1,\infty)$, of the form
\begin{equation*}
  \dot{x}(t) = -A_p x(t) + \1_\thickset u(t),\quad t\in (0,T],\quad x(0) = x_0\in L_p(\R^d),
\end{equation*}
where $-A_p$ is a strongly elliptic differential operator of order $m \in \N$ with constant coefficients, $\1_\thickset \from L_p (\thickset)\to L_p(\R^d)$ is the embedding from a measurable set $\thickset \subset \R^d$ to $\R^d$,  $T>0$, and where $u \in L_r ((0,T);L_p (\thickset))$ with some $r \in (1,\infty)$. (For the sake of simplicity we restrict the discussion in this section to $r \in (1,\infty)$ only and exclude the cases $r=1$ and $r=\infty$.) Moreover, we introduce the cost functional $J : L_r ((0,T); L_p (E)) \to [0,\infty)$ by 
\[
 J (u) = \int_0^T \alpha (t) \lVert u(t) \rVert_{L_p (E)}^r \drm t + \int_{0}^T \beta (t) \lVert x (t) - x^{\mathrm{d}} (t) \rVert_{L_p (\R^d)}^r \drm t ,
\]
with some time-dependent weights $0 < \alpha_1 \leq \alpha(t) \leq \alpha_2 < \infty$ and $0 < \beta_1 \leq \beta (t) \leq \beta_2 < \infty$ for all $t \in [0,T]$, and some desired trajectory $x^{\mathrm{d}} : (0,T) \to L_p (\R^d)$. 
Thus, the first term describes the (weighted) cost of the control function, while the second penalizes deviations from a given trajectory.
Let $\varepsilon > 0$, $x_0 \in L_p (\R^d)$ be our initial state, and $x^* \in L_p (\R^d)$ a given target state. We are interested in the optimal control problem 
\begin{equation}\label{eq:problem-optimal-control}
 \min_{u \in L_r ((0,T);L_p (E))} \left\{ J (u) \colon \lVert x (T) - x^* \rVert_{L_p (\R^d)} \leq \varepsilon \right\} .
\end{equation}
That is, we are interested in finding a cost-optimal control function that steers our system up to an error $\varepsilon$ to a given target state. 
If $r,p \in (1,\infty)$, by standard convex optimization wisdom it follows that the problem \eqref{eq:problem-optimal-control} has a unique solution which we denote by $u^{\mathrm{opt}}$. Indeed, since the domain of $J$ is a reflexive space, and the functional $J$ is proper, convex, coercive, lower-semicontinuous and strictly convex in the sense of \cite[Theorem~2.19]{Peypouquet-15}, it follows that $\operatorname{argmin} J$ is a singleton. We denote the global minimizer of $J$ by $u^\mathrm{min}$. 
In order to see that the constrained minimization problem \eqref{eq:problem-optimal-control} has a unique solution as well, one introduces the indicator function 
\[
 I (x) = \begin{cases}
          0 & \text{if} \ \lVert x-x^* \rVert_{L_p (\R^d)} \leq \varepsilon ,\\
          +\infty  & \text{else} .
         \end{cases}
\]
Then, our problem \eqref{eq:problem-optimal-control} is equivalent to the problem
\[
 \min_{u \in L_r ((0,T);L_p (E))} \hat J (u) , 
 \quad\text{where}\quad
 \hat J (u) = J (u) + I (x(T)) .
\]
Since $\hat J$ is again proper, convex, coercive, lower-semicontinuous and strictly convex, \cite[Theorem~2.19]{Peypouquet-15} implies that $\operatorname{argmin} \hat J$ is a singleton. Note that the unique solution $u^{\mathrm{opt}}$ of \eqref{eq:problem-optimal-control} depends on the initial state $x_0$ and the target state $x^*$. Some remarks are in order:
\begin{enumerate}[(i)]
 \item If $\alpha = 1$, $\beta = 0$, and $x^* = 0$, then the latter minimization problem corresponds to the control problem studied in the paper under consideration. In particular, $u^{\text{opt}}$ corresponds to our control function, and $J (u^\text{opt})^{1/r}$ to the $x_0$-dependent control cost. In particular, in this paper we prove an upper bound on the control cost $J (u^\text{opt})^{1/r}$ which is uniform in the choice of the initial state $x_0 \in L_p (\R^d)$. Moreover, we give explicit dependence of the upper bound on the geometric properties of the control set $E$.
 \item If $\beta \not = 0$, then the second term in $J$ models controls to trajectories, as it punishes large deviations from the desired trajectory. Such problems have been already studied in the case of Hilbert spaces, i.e.\ where $r=p=2$, see \cite{LazarM-21} and the references therein. The paper \cite{LazarM-21} provides an abstract formula for $u^\text{opt}$, which is used to derive numerical algorithms for the approximation of $u^\text{opt}$ in certain situations.
\end{enumerate}
Let us finish this section by addressing some possible future projects.
One possible goal might be to prove an abstract formula for $u^{\mathrm{opt}}$ in the Banach space setting, just in the manner of, e.g., the results in \cite{LazarM-21}. Such an abstract formula can then be applied to numerical studies. Another goal might be to generalize our upper bound on the control cost to the setting of control to trajectories, i.e. providing an upper bound on $J (u^\text{opt})$.
The first question appears to be well worth studying: However, it seems that the methods to answer this question are entirely different from those used for our results. The second question is to us at least as interesting, but we believe it to be more challenging. Recall that in the case $\beta = 0$ we derive our result by giving an observability estimate. By duality such an observability constant gives an upper bound on $J(u_{\text{opt}})$.
To the best of our knowledge, if $\beta \neq 0$ this duality argument, and hence our whole approach, fails. This means it is not clear how to easily employ our main result on an observability estimate to control to trajectories.
\subsection{Controllability of non-linear problems}
A general strategy to treat non-linear problems stems from linearisations (i.e.\ first considering linear problems) and fixed point arguments under suitable assumptions on the non-linearity (such as small Lipschitz constants).
Let us restrict to semilinear problems of the form
\begin{equation}
  \label{eq:semilinear}
  \dot{x}(t) = -A_p x(t) + f(x(t),\nabla x(t)) + \1_\thickset u(t),\quad t\in (0,T],\quad x(0) = x_0\in L_p(\R^d),
\end{equation}
where $f\from\R\times \R^d\to\R$ is smooth and locally Lipschitz with $f(0,0) = 0$ and a suitable growth condition at infinity.
For the case of the semilinear heat equation (sometimes with no gradient dependence of $f$), where $-A_p = \Delta$ is the Laplacian, cost-uniform null-controllability was studied both in bounded \cite{FabrePZ-95,Fernandez-Cara-97,Fernandez-CaraZ-00} as well as unbounded domains \cite{Zuazua-01,CabanillasMZ-01,Zuazua-07, Gonzalez-BurgosT-07}. Since we are interested in unbounded domains, we will focus on this case here. There, a typical condition needed is that $\R^d\setminus E$ is bounded. Note that in the linear case we only require that $E$ is thick, which is much weaker. 

Let us sketch the method in the case of the semilinear heat equation. Controllability of \eqref{eq:semilinear} is derived in two steps. First, one proves an observability estimate for the dual system of the linearised system, i.e.\ of a system of the form
\begin{align*} 
 \dot{x}(t) & = -A_p' x(t) + a x(t) - \operatorname{div}(bx(t)), \quad t\in (0,T], \quad x(0) = x_0\in L_{p'}(\R^d)\\
 y(t) & = x(t)|_E, \quad t\in (0,T].
\end{align*}
Here, $a$ and $b$ come from linearising $f$. This yields cost-uniform null-controllability of the linearised system
\[\dot{x}(t) = -A_p x(t) + a x(t) + b \cdot \nabla x(t) + \1_E u(t) , \quad t\in (0,T],\quad x(0) = x_0\in L_p(\R^d).\]

For the second step, we note that for controllability of \eqref{eq:semilinear} it suffices to obtain controllability of 
\[\dot{x}(t) = -A_p x(t) + \1_{\R^d\setminus E} f(x(t),\nabla x(t)) + \1_\thickset u(t),\quad t\in (0,T],\quad x(0) = x_0\in L_p(\R^d);\]
cf.\ e.g.\ \cite[Section 3]{CabanillasMZ-01}.
Writing $f(x, \nabla x) = F(x) x + G(x) \cdot \nabla x$, for fixed $\tilde{x}$ we get the linearised control problem with $a = F(\tilde{x})$ and $b = G(\tilde{x})$. One then establishes a map $N\from \tilde{x}\mapsto x$, which maps $\tilde{x}$ to the solution state $x$ of the linearised control problem. The goal is then to show that $N$ has a fixed point, which may be done by applying Schauder's fixed point theorem.

It seems an interesting open problem whether our result for the linear case can be extended to the semilinear situation as sketched above.

\subsection{Determining control functions}

Our result states (approximate) null-controllability for \eqref{eq:system-intro}, i.e.\ for all $x_0$ there exists $u$ such that $x(T) = 0$ (has arbitrarily small norm). Thus, we obtain an existence statement.

Of course, in applied contexts, one would not only want to know that such a control function $u$ exists but how one can construct it. For the Hilbert space case $p=r=2$, one can either employ that control functions can be considered as orthogonal projections on an affine subspace, see \cite[Remark 2.6]{NakicTTV-20}, or consult the practical algorithm obtained in \cite[Sections 4,5]{LazarM-21} to construct the optimal minimizer $u^\text{opt}$. This changes the perspective from (functional analytic) mathematical control theory to PDE-constrained optimization.
Note that, to the best of our knowledge, some arguments used there require to work in Hilbert spaces, so they are not available in our general Banach space situation. 
Since this question is of interest nonetheless, it may be dealt with in a forthcoming paper.
\paragraph{Acknowledgement} We thank the anonymous referees for their careful proofreading, as well as for their valuable comments which helped to significantly improve the manuscript.

\appendix

\section{Sufficient criteria for observability in Banach spaces}

We provide an abstract sufficient criteria for final-state observability in Banach space, which is a slight generalization of Theorem 2.1 in \cite{GallaunST-20}, see also \cite{Miller-10,TenenbaumT-11,WangZ-17,BeauchardP-18,NakicTTV-20,Barcena-PetiscoZ-21} for earlier results. In particular, it does not assume strong continuity of the semigroup. For the proof, we comment only on the necessary modifications compared to \cite{GallaunST-20}.
\begin{Theorem} \label{thm:spectral+diss-obs} 
Let $X$ and $Y$ be Banach spaces, $C\colon X\to Y$ a bounded linear operator, $(S_t)_{t\geq 0}$ a semigroup on $X$, $M \geq 1$ and $\omega \in \R$ such that $\lVert S_t \rVert \leq M \euler^{\omega t}$ for all $t \geq 0$, 
and assume that for all $x\in X$ the mapping $t\mapsto \norm{C S_t x}_Y$ is measurable.
Further, let $\lambda^* \geq 0$, $(P_\lambda)_{\lambda>\lambda^*}$ a family of bounded linear operators in $X$,
$r \in [1,\infty]$, $d_0,d_1,d_3,\gamma_1,\gamma_2,\gamma_3,T > 0$ with $\gamma_1 < \gamma_2$, and $d_2\geq 1$, and assume that
\begin{align} 
\forall x\in X \ \forall \lambda > \lambda^* &\colon \quad \lVert P_\lambda x \rVert_{ X } \le d_0 \euler^{d_1 \lambda^{\gamma_1}} \lVert C  P_\lambda x \rVert_{Y }
\label{eq:ass:uncertainty} , 
\end{align}
and
\begin{align}
\forall x\in X \ \forall \lambda > \lambda^* \ \forall t\in (0,T/2] &\colon \quad \lVert (\id-P_\lambda) S_t x \rVert_{X} \le d_2 \euler^{-d_3 \lambda^{\gamma_2} t^{\gamma_3}} \lVert x \rVert_{X} \label{eq:ass:dissipation} .
\end{align}
Then we have for all $x \in X$
\begin{equation*} 
 \lVert S_T x \rVert_{X} \leq \begin{cases}
    C_{\mathrm{obs}} \left(\int_0^T \norm{CS_t x}_Y^r \drm t\right)^{1/r} & \text{if } r\in [1,\infty),\\
    C_{\mathrm{obs}}\esssup_{t\in [0,T]} \norm{CS_t x}_Y & \text{if } r=\infty ,
 \end{cases} 
\end{equation*}
with
\begin{align*}
 C_{\mathrm{obs}} = \frac{C_1}{T^{1/r}} \exp \left(\frac{C_2}{T^{\frac{\gamma_1 \gamma_3}{\gamma_2 - \gamma_1}}} +   C_3 T\right), 
\end{align*}
where $T^{1/r} = 1$ if $r=\infty$, and
\begin{align*}
  C_1 
  &= (4 M d_0) \max \Bigl\{\left( 4d_2 M^2   (d_0 \lVert C \rVert+1) \right)^{8/(\euler \ln 2)}, \euler^{4d_1\left(2\lambda^*\right)^{\gamma_1}}\Bigr\}, \\
 C_2 &= 4 \bigl(2^{\gamma_1} (2\cdot 4^{\gamma_3})^\frac{\gamma_1 \gamma_2}{\gamma_2-\gamma_1} d_1^{\gamma_2} / d_3^{\gamma_1} \bigr)^{\frac{1}{\gamma_2-\gamma_1}} , \\[1ex]
 C_3 & = \max\{\omega , 0\} \bigl(1 + 10 / (\euler \ln 2) \bigr).
\end{align*}
\end{Theorem}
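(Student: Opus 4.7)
The plan is to follow the Lebeau--Robbiano telescoping scheme of \cite{GallaunST-20}, tracking carefully the places where strong continuity of $(S_t)_{t\geq 0}$ was invoked there and replacing them by the weaker hypothesis that $t\mapsto \lVert CS_t x\rVert_Y$ is measurable.

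First I would establish the basic one-step telescoping inequality: for $\lambda > \lambda^*$ and $0\leq \tau_1 < \tau_2$ with $\tau_2 - \tau_1 \leq T/2$, split $S_{\tau_2}x = P_\lambda S_{\tau_2}x + (\id - P_\lambda) S_{\tau_2 - \tau_1} S_{\tau_1} x$, apply \eqref{eq:ass:uncertainty} to the first summand together with $CP_\lambda S_{\tau_2}x = CS_{\tau_2}x - C(\id-P_\lambda)S_{\tau_2 - \tau_1} S_{\tau_1}x$, and \eqref{eq:ass:dissipation} to the second, to obtain
\[
 \lVert S_{\tau_2} x\rVert_X \leq d_0 \euler^{d_1\lambda^{\gamma_1}} \lVert CS_{\tau_2} x\rVert_Y + \bigl(1 + d_0 \lVert C\rVert \euler^{d_1\lambda^{\gamma_1}}\bigr) d_2 \euler^{-d_3 \lambda^{\gamma_2}(\tau_2 - \tau_1)^{\gamma_3}} \lVert S_{\tau_1}x\rVert_X.
\]

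Next I would fix a dyadic subdivision of $[T/2, T]$ by $T_k = T/2 + T 2^{-(k+1)}$ and $I_k = [T_{k+1}, T_k]$, and pick spectral scales $\lambda_k = \beta\cdot 2^{k\alpha}$ with $\alpha = \gamma_3/(\gamma_2 - \gamma_1)$ and $\beta$ tuned (in terms of the data) so that $B_k := (1 + d_0\lVert C\rVert \euler^{d_1\lambda_k^{\gamma_1}})d_2 \euler^{-d_3 \lambda_k^{\gamma_2}\lvert I_k\rvert^{\gamma_3}}$ is dominated by a fixed geometric ratio $q < 1$. Using measurability of $t\mapsto \lVert CS_t x\rVert_Y$ and a Chebyshev-type argument, inside the right half of each $I_k$ I pick a time $s_k$ with
\[
\lVert CS_{s_k} x\rVert_Y \leq 2\bigl(\lvert I_k\rvert/2\bigr)^{-1/r}\left(\int_{I_k}\lVert CS_t x\rVert_Y^r \drm t\right)^{1/r}
\]
(with the obvious modification for $r=\infty$); the right-half choice forces $s_k - s_{k+1} \geq \lvert I_k\rvert/2 > 0$, keeping the dissipation factor nontrivial. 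Applying the one-step inequality with $(\tau_2,\tau_1) = (s_k, s_{k+1})$ and unfolding gives
\[
 \lVert S_{s_0} x\rVert_X \leq \sum_{k=0}^{K-1} A_k\Bigl(\prod_{j<k} B_j\Bigr)\lVert CS_{s_k}x\rVert_Y + \prod_{j<K} B_j\cdot \lVert S_{s_K}x\rVert_X,
\]
with $A_k = d_0\euler^{d_1\lambda_k^{\gamma_1}}$. Sending $K\to\infty$ kills the residual because $\lVert S_{s_K}x\rVert_X \leq M\euler^{\omega T}\lVert x\rVert_X$ while $\prod B_j \to 0$ geometrically. Combining with $\lVert S_T x\rVert_X \leq M\euler^{\omega T/2}\lVert S_{s_0}x\rVert_X$ and a final Hölder step that bundles the $\lVert CS_{s_k}x\rVert_Y$ into the norm of $\lVert CS_\cdot x\rVert_Y$ over $(0,T)$, produces the claimed observability inequality.

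The explicit shapes of $C_1, C_2, C_3$ then fall out of a careful bookkeeping of $\alpha, \beta$ and the geometric series; this part is essentially identical to \cite{GallaunST-20}. The only genuine obstacle, and the reason for stating this variant, is verifying that no step of the iteration needs Bochner measurability of $s\mapsto S_s x$ or continuity: every appearance of the trajectory enters either through the uniform bound $\lVert S_t\rVert \leq M\euler^{\omega t}$ or through the scalar map $t\mapsto \lVert CS_t x\rVert_Y$, so the Chebyshev selection of $s_k$ and the conversion of the iterated sum into an $L_r$-norm of the observation go through verbatim, yielding the promised mild generalization of \cite[Theorem~2.1]{GallaunST-20}.
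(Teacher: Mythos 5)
Your proposal is correct in substance and rests on the same Lebeau--Robbiano strategy, but it implements the key step differently from the paper. The paper's proof first eliminates every $\tau$-dependent quantity except $G(\tau)=\lVert CS_\tau x\rVert_Y$ from the one-step estimate, arriving at a pointwise inequality valid for all $\tau\in[t/2,t]$ whose right-hand side involves only $G(\tau)$ and the constant (in $\tau$) quantity $F(t/4)=\lVert S_{t/4}x\rVert_X$; it then \emph{integrates over} $\tau\in[t/2,t]$ --- this is precisely where the hypothesis that only $t\mapsto\lVert CS_tx\rVert_Y$ be measurable is used --- and feeds the resulting averaged inequality into the iteration of \cite{GallaunST-20} unchanged. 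You instead exploit the measurability of $G$ through a Chebyshev selection of good observation times $s_k$ in the right halves of dyadic intervals accumulating at $T/2$, and run a self-contained telescoping sum. This is equally legitimate: the trajectory is only ever evaluated at the selected points and otherwise enters through the uniform bound $\lVert S_t\rVert\leq M\euler^{\omega t}$, so no Bochner measurability is needed, and your checks (gaps $s_k-s_{k+1}\geq\lvert I_k\rvert/2$ and $\leq 3T/8\leq T/2$ so that \eqref{eq:ass:dissipation} applies, $\lambda_k=\beta 2^{k\gamma_3/(\gamma_2-\gamma_1)}$ balancing the two exponents) are the right ones. What the paper's route buys is that the literal constants $C_1,C_2,C_3$ of the statement (the exponent $8/(\euler\ln 2)$, the summand $10/(\euler\ln 2)$ in $C_3$, the precise powers of $2$ and $4$ in $C_2$) drop out of the cited iteration without further work; your iteration produces constants of the same functional form in $T$, $d_0,\dots,d_3$, $\gamma_1,\gamma_2,\gamma_3$, $\lambda^*$, but with different numerical prefactors, so matching the stated $C_1,C_2,C_3$ exactly would require redoing the bookkeeping rather than citing it. Two small points to tighten: the final step should read $\lVert S_Tx\rVert_X\leq M\euler^{\omega_+(T-s_0)}\lVert S_{s_0}x\rVert_X$ with $\omega_+=\max\{\omega,0\}$, and the requirement $\beta=\lambda_0>\lambda^*$ (needed so that \eqref{eq:ass:uncertainty} applies to every $\lambda_k$) must be built into the choice of $\beta$; it is the source of the term $\euler^{4d_1(2\lambda^*)^{\gamma_1}}$ in $C_1$.
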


The assumption in \eqref{eq:ass:uncertainty} is an abstract uncertainty principle (sometimes also called spectral inequality), while \eqref{eq:ass:dissipation} is a dissipation estimate. Thus, Theorem \ref{thm:spectral+diss-obs} can be rephrased that an uncertainty principle together with a dissipation estimate implies a final-state observability estimate.

\begin{Remark} 
  In the situation of Theorem~\ref{thm:spectral+diss-obs}, if we assume that $t\mapsto CS_t x$ is Bochner measurable, we can rewrite the statement of the theorem as
\begin{align*}
 \lVert S_T x \rVert_{X} \leq C_{\mathrm{obs}} \lVert C S_{(\cdot)} x \rVert_{L_r((0,T);Y)}.
\end{align*}
\end{Remark}

\begin{proof}[Proof of Theorem~\ref{thm:spectral+diss-obs}]
Since we do not assume the semigroup $(S_t)_{t \geq 0}$ to be strongly continuous, we cannot apply \cite[Theorem 2.1]{GallaunST-20} directly. The strong continuity of $(S_t)_{t \geq 0}$ was assumed in \cite{GallaunST-20} in order to ensure that for all $x \in X$ and $\lambda > \lambda^*$ the functions
\begin{align*}
   F (t) &= \bigl\lVert S_t x \bigr\rVert_X ,  
 & F_\lambda (t) &= \bigl\lVert P_\lambda S_t x \bigr\rVert_X ,
 & F_\lambda^\perp (t) &= \bigl\lVert (\id - P_\lambda) S_t x \bigr\rVert_X , \\
   G (t) &= \bigl\lVert C S_t x \bigr\rVert_Y ,
 & G_\lambda (t) &= \bigl\lVert C P_\lambda S_t x \bigr\rVert_Y ,
 & G_\lambda^\perp (t) &=\bigl\lVert C (\id - P_\lambda) S_t  x \bigr\rVert_Y ,
 \end{align*}
 are measurable. The measurability of these six functions was used to obtain the estimate
 \begin{align*}
 F (t) &\leq
 \frac{2M\euler^{\omega_+ T} d_0 \euler^{d_1\lambda^{\gamma_1}}}{t} \int_{t/2}^t G(\tau) \drm \tau +  \frac{d_2 M^{2} \euler^{5\omega_+ T / 4} \euler^{d_1\lambda^{\gamma_1}}}{\euler^{d_3 \lambda^{\gamma_2} (t/4)^{\gamma_3}}}\left(  d_0 \lVert C \rVert +1 \right) F(t / 4) ,
\end{align*}
where $\omega_+ = \max \{0, \omega\}$.
Such an inequality implies the statement of the theorem by iteration, see \cite{GallaunST-20}.
Thus it suffices to show the last displayed inequality by assuming merely measurability of the mapping $t \mapsto G (t)$.
Let $t > 0$, $\tau \in [t/2 , t]$ and $x\in X$. Since $F (\tau) \leq F_\lambda (\tau) + F_\lambda^\perp (\tau)$, by our assumptions and by the semigroup property we obtain 
\begin{align*}
 F (\tau) & \leq d_0 \euler^{d_1 \lambda^{\gamma_1}} G_\lambda (\tau) + d_2 \euler^{- d_3 \lambda^{\gamma_2} (\tau/2)^{\gamma_3}} F (\tau / 2) .
\end{align*}
Using $G_\lambda (\tau) \leq G (\tau) + G_\lambda^\perp (\tau) \leq  G (\tau) + \lVert C \rVert F_\lambda^\perp (\tau)$, our assumption, $\euler^{d_1 \lambda^{\gamma_1}} \geq 1$, and $F (\tau / 2) \leq M \euler^{\omega_+ t / 4} F (t / 4)$ we obtain
\begin{align*}
F (\tau) \leq d_0 \euler^{d_1 \lambda^{\gamma_1}} G(\tau) +(d_0 \lVert C \rVert + 1) d_2 \euler^{- d_3 \lambda^{\gamma_2} (\tau/2)^{\gamma_3}} \euler^{d_1 \lambda^{\gamma_1}} M \euler^{\omega_+ t / 4} F (t / 4) .
\end{align*}
 Since $F (t) \leq M  \euler^{\omega_+ t} F (\tau) $, we obtain
\begin{align*}
 F (t)\leq
 M \euler^{\omega_+ t} d_0 \euler^{d_1 \lambda^{\gamma_1}} G(\tau) +(d_0 \lVert C \rVert + 1) d_2 \euler^{- d_3 \lambda^{\gamma_2} (\tau/2)^{\gamma_3}} \euler^{d_1 \lambda^{\gamma_1}} M^2 \euler^{\omega_+  5t / 4} F (t / 4)  .
\end{align*}
Since the mapping $\tau \mapsto G(\tau)$ is measurable by assumption, we can integrate this inequality with respect to $\tau$, and obtain the desired estimate.
\end{proof}
%
%

%
\end{document}